\newtheorem{theorem}{Theorem}[subsection]
\newtheorem{lemma}[theorem]{Lemma}
\newtheorem{conjecture}[theorem]{Conjecture}
\newtheorem{definition}[theorem]{Definition}
\newtheorem{proposition}[theorem]{Proposition}
\theoremstyle{remark}
\newtheorem{remark}[theorem]{Remark}
\newtheorem{example}[theorem]{Example}
\newtheorem{notation}[theorem]{Notation}
\newcommand{\QQ}{\mathbb{Q}}
\newcommand{\PP}{\mathbb{P}}
\newcommand{\ZZ}{\mathbb{Z}}
\renewcommand{\AA}{\mathbb{A}}
\newcommand{\OO}{\mathscr{O}}
\newcommand{\wP}[1]{\mathbb{P}^1_{a,1}}
\renewcommand{\sslash}{\mathord{/\mkern-6mu/}}
\let\tinymatrix\smallmatrix
\patchcmd{\tinymatrix}{\scriptstyle}{\scriptscriptstyle}{}{}
\patchcmd{\tinymatrix}{\scriptstyle}{\scriptscriptstyle}{}{}
\patchcmd{\tinymatrix}{\vcenter}{\vtop}{}{}
\patchcmd{\tinymatrix}{\bgroup}{\bgroup\scriptsize}{}{}
\newcommand{\bmu}{\pmb{\mu}}
\newcommand{\bi}{\mathfrak{i}}
\newcommand{\Gm}{\mathbb{G}_m}
\newcommand{\age}{\mathrm{age}}
\newcommand{\cX}{\mathcal{X}}
\newcommand{\cC}{\mathcal{C}}
\newcommand{\cL}{\mathcal{L}}
\renewcommand{\L}{\mathscr{L}}
\newcommand{\In}[1]{\mathcal{I}(#1)}
\newcommand{\CR}{\mathrm{CR}}
\renewcommand{\AA}{\mathbb{A}}
\newcommand{\vir}{\mathrm{vir}}
\newcommand{\one}{\mathbf{1}}
\newcommand{\tw}{\mathrm{tw}}
\DeclareMathOperator{\Pic}{Pic}
\DeclareMathOperator{\Hom}{Hom}
\newcommand{\NOTEoff}{\newcommand{\Commentn}[1]{}}
\newcommand\TODOon{\newcommand{\Comment}[1]{\noindent\color{red}{\texttt TODO: }##1\color{black}}}
\newcommand\RACHELon{\newcommand{\Commentr}[1]{\noindent\color{blue}{\texttt Rachel: }##1\color{black}}}
\newcommand\Nawazon{\newcommand{\Commentp}[1]{\noindent\color{green}{\texttt Nawaz: }##1\color{black}}}
\begin{document}

\TODOon

\NOTEoff

\Nawazon

\RACHELon

\title{Subtleties of Quantum Lefschetz without convexity}

\author{Nawaz Sultani}
\address[N.Sultani]{Department of Mathematics, East Hall\\
University of Michigan\\
Ann Arbor, CA 48109\\
U.S.A.}
\email{sultani@umich.edu}

\author{Rachel Webb}
\address[R. Webb]{Department of Mathematics\\
University of California, Berkeley\\
Berkeley, CA 94720-3840\\
U.S.A.}
\email{rwebb@berkeley.edu}

\date{\today}

\begin{abstract}
Let $Y\sslash_\theta G$ be a complete intersection in a GIT quotient $X\sslash_\theta G$ cut out by a $G$-representation $E$.
We show that the $E$-twisted quasimap $I$-function recovers invariants of $(Y, G, \theta)$ if its nonequivariant limit exists \textit{before} restriction to $Y\sslash_\theta G$. This corrects a conjecture credited to Coates-Corti-Iritani-Tseng. We explain how to correct computations in the literature based on the faulty conjecture.
\end{abstract}

\keywords{quasimaps, $I$-function, orbifold Gromov-Witten theory, quantum Lefschetz}

\subjclass[2020]{14N35 (primary), 14A20 (secondary)}
\maketitle

\section{Introduction}

Let $X=\AA^n$, let $G$ be a connected complex reductive group acting on $X$, let $\theta$ be a character of $G$, and define 
\[X\sslash_\theta G := [X^{ss}_\theta(G) / G]\]
to be the stacky GIT quotient, where $X^{ss}_\theta(G)$ is the set of semistable points (in the sense of \cite{king}). Assume that all semistable points are stable and that this locus is nonempty. Let $E$ be a $G$-representation, let $s$ be a regular section of the bundle $E_[X/G] \to [X/G]$ induced by $E$, and let $Y$ be the zero locus of $s$ with embedding
$\bi: Y\sslash_\theta G \to X\sslash_\theta G$.
Assume that the tuple $(X, G, \theta, E, s)$ is a \textit{CI-GIT presentation} in the sense of \cite{SW22a}, so that the quasimap $I$-function \eqref{eq:Ifunc-formula} is on the Lagrangian cone of $Y\sslash_\theta G$.

On the other hand, the torus $\Gm$ acts on the total space of $E$ by scaling the fibers, and there is an associated \textit{equivariant twisted quasimap $I$-function} $I^{X\sslash G, E}$ (see \eqref{eq:twisted} below). Let $\kappa$ denote the equivariant parameter of the $\Gm$ action. It follows from \cite[Thm~22]{CCIT19} and \cite[Thm~1.1]{coates-short} that if $G$ is abelian, $E$ is convex, and the nonequivariant limit of $\lim_{\kappa \to 0}I^{X\sslash G, E}$ exists, then $\lim_{\kappa \to 0}I^{X\sslash G, E}$ lies on the Lagrangian cone of $Y\sslash_\theta G$.

In the conclusion of \cite{failure}, the authors observe that if $E$ satisfies ``certain mild conditions'' weaker than convexity, the limit $\lim_{\kappa \to 0} I^{X\sslash G, E}$ still exists, and they hope for a relationship with invariants of ${Y\sslash_\theta G}$ in this case. Their hope is made precise in \cite{OP} as follows, where it is credited to Coates-Corti-Iritani-Tseng.

\begin{conjecture}[Conjecture 5.2 of \cite{OP}]\label{conj:ql}
Assume $G$ is abelian and $Y\sslash G$ has projective coarse moduli space. If $\lim_{\kappa \to 0} \bi^* I^{X\sslash G, E}$ exists, then $\lim_{\kappa \to 0} \bi^* I^{X\sslash G, E}$ is on the Lagrangian cone of $Y\sslash G$.
\end{conjecture}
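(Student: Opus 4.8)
The plan is to deduce the conjecture from the already-established convex case of \cite{CCIT19,coates-short} by a limit-and-restriction argument, treating the possible nonexistence of the \emph{unrestricted} limit as the only new phenomenon to be controlled. Recall that $I^{X\sslash G,E}$ always lies on the $\Gm$-equivariant $E$-twisted Lagrangian cone $\cL^{\mathrm{tw}}$ of $X\sslash_\theta G$, a property that holds at the level of $\kappa$-series before any specialization. I would first write the twisted $I$-function in factored form
\[
I^{X\sslash G,E}(q,z)=\sum_\beta q^\beta\, I_\beta^{X\sslash G}(z)\cdot M_E(\beta,\kappa,z),
\]
where $I_\beta^{X\sslash G}$ is the untwisted contribution and, since $G$ is abelian, $M_E=\prod_j M_{L_j}$ is a product over the characters $L_j$ of $E$ of regularized ratios of linear forms $(\kappa+c_1(L_j)+mz)$. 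The conjecture then becomes the assertion that $\lim_{\kappa\to 0}\bi^{*}I^{X\sslash G,E}$, when it exists, is the restriction of a genuine point of $\cL^{\mathrm{tw}}$ and so lands on $\cL_{Y\sslash G}$; the entire difficulty concentrates in those curve classes $\beta$ for which $\langle c_1(L_j),\beta\rangle<0$ for some $j$.

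Second, I would split the sum into \emph{good} classes, where every $\langle c_1(L_j),\beta\rangle\ge 0$ so that $M_E$ is polynomial in $\kappa$ and $\lim_{\kappa\to 0}$ commutes with $\bi^{*}$ termwise, reducing these contributions directly to the convex statement of \cite{CCIT19,coates-short}; and the finitely many \emph{bad} classes, where $M_E$ carries a denominator factor $(\kappa+c_1(L_j)+mz)$ with $m\le 0$, producing the potential pole at $\kappa=0$ (one cannot invert the nilpotent class $c_1(L_j)$). For the bad classes I would exploit the identification $\bi^{*}E\cong N$ of the restricted bundle with the normal bundle $N$ of $Y\sslash_\theta G$, together with a Koszul-resolution or virtual-localization computation, to rewrite $\bi^{*}(I_\beta^{X\sslash G}M_E)$ as a sum of residue contributions supported along $Y\sslash_\theta G$. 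The aim of this step is to show that $\bi^{*}$ cancels exactly the denominator factors responsible for the pole, so that $\lim_{\kappa\to 0}\bi^{*}(I_\beta^{X\sslash G}M_E)$ exists and equals the restriction of a well-defined class on $\cL^{\mathrm{tw}}$.

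Third, having produced a candidate limit, I would verify cone membership through a local characterization of $\cL_{Y\sslash G}$ — the string and divisor equations together with the Birkhoff-factorization recursion — which only constrains finitely many coefficients at a time, so the good-class contributions (already on the cone) together with the finitely many repaired bad-class terms can be checked coefficient by coefficient. Projectivity of the coarse space of $Y\sslash_\theta G$ enters precisely here, ensuring that the Poincar\'e pairing, and hence the cone, is defined over the nonequivariant ground ring after specializing $\kappa$.

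The step I expect to be the genuine obstacle is the cancellation claim of the second paragraph. This is exactly where convexity was invoked in \cite{CCIT19}: for a bad class $\beta$, the residue of $M_E$ at $\kappa=0$ is a class on $X\sslash_\theta G$ whose support need not lie in $Y\sslash_\theta G$, so applying $\bi^{*}$ can force the \emph{numerical} limit to exist without that limit being the restriction of any element of $\cL^{\mathrm{tw}}$. In other words, $\lim_{\kappa\to 0}$ and $\bi^{*}$ need not commute, and the repaired bad terms threaten to be phantom classes manufactured by this non-commutation rather than honest points of $\cL_{Y\sslash G}$. Showing that these phantom contributions either vanish or reassemble onto the cone is the crux of the argument; if they cannot be controlled, then existence of $\lim_{\kappa\to 0}\bi^{*}I^{X\sslash G,E}$ alone is insufficient, and the natural repair is to require the limit to exist before restriction.
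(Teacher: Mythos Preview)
The conjecture you are attempting to prove is \emph{false}, and the paper's treatment of it is a disproof, not a proof. The paper constructs an explicit counterexample (Section~\ref{sec:failure}): present $X\sslash_\theta G=\PP(1,1,1,3)$ via a $(\Gm)^2$-action on $\AA^5$, take $E$ to be the weight-$(4,1)$ character, and let $Y$ be a general quartic hypersurface. For the $I$-effective class $\beta=(-1,3)$ one has $\beta\cdot(4,1)=-1$, so this is a bad class in your terminology. The unrestricted term $I^{X\sslash G,E}_{(-1,3)}$ carries a factor $H^3/(H+\kappa)$ with $H$ the hyperplane class; since $\dim(Y\sslash G)=2$ one has $\bi^*H^3=0$, so $\lim_{\kappa\to 0}\bi^*I^{X\sslash G,E}_{(-1,3)}=0$ exists. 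But the genuine quasimap $I$-function, computed via Theorem~\ref{thm:Ifunc-formula}, gives the nonzero class $I_{(-1,3)}=\frac{1}{3!z^3}[B\bmu_3]_1$. The paper then checks (Lemma~\ref{lem:conj-is-false}) that the two series have identical truncations $[z\,\cdot\,-z]_+$; since a point of the cone is determined by this truncation, and $I$ is on the cone, $\lim_{\kappa\to 0}\bi^*I^{X\sslash G,E}$ cannot also lie on it.

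Your final paragraph correctly identifies where the argument collapses: the non-commutation of $\lim_{\kappa\to 0}$ and $\bi^*$ at the bad class $(-1,3)$ manufactures a phantom zero precisely because $\bi^*$ kills the numerator $H^3$ for dimension reasons, and no Koszul or residue manipulation can undo this---the vanishing is genuine on $Y\sslash G$. The ``natural repair'' you propose in your last sentence, requiring the limit to exist \emph{before} restriction, is exactly the content of the paper's Theorem~\ref{thm:main}, which is what the paper actually proves.
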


Recently, multiple authors have used quasimap theory to investigate Gromov-Witten invariants of $Y\sslash_\theta G$ without convexity hypotheses (see \cite{Wang, Webb21, JSZ}). Their theorems allow us to disprove Conjecture \ref{conj:ql} by means of an explicit example (Section \ref{sec:failure}). On the other hand, we can prove that a more modest version of Conjecture \ref{conj:ql} is true (even for nonabelian groups $G$ and nonproper $Y\sslash G$):

\begin{theorem}\label{thm:main}
If $\lim_{\kappa \to 0} I^{X\sslash G, E}$ exists, then $ \bi^* \lim_{\kappa \to 0}I^{X\sslash G, E}$ is on the image of the Lagrangian cone of $Y\sslash G$.
\end{theorem}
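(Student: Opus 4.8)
The plan is to bootstrap from the convex case. The key observation is that the hypothesis ``$\lim_{\kappa\to 0} I^{X\sslash G, E}$ exists'' is stronger than ``$\lim_{\kappa\to 0}\bi^* I^{X\sslash G, E}$ exists'', and this stronger hypothesis should let us replace $(X, G, E)$ by an auxiliary convex situation without changing the relevant limit. Concretely, I would first recall from \cite{SW22a} the explicit formula \eqref{eq:Ifunc-formula} and the twisted $I$-function \eqref{eq:twisted}: the twisting multiplies the summand indexed by a curve class $\beta$ by a ratio of $\Gm$-equivariant Euler classes $e_{\Gm}(E_\beta^+)/e_{\Gm}(E_\beta^-)$, where $E_\beta^\pm$ are the (finite-rank) positive/negative weight pieces of the representation-theoretic pushforward of $E$ along a degree-$\beta$ map from the orbifold line. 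When $E$ is convex, $E_\beta^-$ vanishes, the ratio is a polynomial in $\kappa$, and the nonequivariant limit tautologically exists; in general it is a genuine rational function of $\kappa$, and demanding that the limit exist on the whole cone (not just after $\bi^*$) forces a cancellation of the $E_\beta^-$ poles summand-by-summand, up to terms that map to zero under $\bi^*$.

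The main step is then the following reduction. I would like to produce a convex $G$-representation $E'$, or more flexibly a pair of convex representations, together with a comparison of twisted $I$-functions, so that after applying $\bi^*$ the two agree. The natural candidate exploits that $\bi^*$ of the $E$-twisted $I$-function only sees the Euler class $e(E_{[X/G]})$ restricted to $Y\sslash G$; and by the CI-GIT hypothesis $s$ is a regular section, so $Y\sslash G$ has the expected dimension and $\bi^*$ behaves like capping with $e(E)$. The technical heart is to check that the hypothesized $\kappa\to 0$ limit commutes with $\bi^*$ and with the geometric operations identifying the cone of $Y\sslash G$ as a subspace (via the functoriality of Lagrangian cones under the closed embedding $\bi$, in the sense used in \cite{CCIT19, coates-short}). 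Once the limit is known to exist before restriction, each $\beta$-summand is a Laurent expansion in $\kappa$ with no negative powers, so setting $\kappa = 0$ is unproblematic and commutes with the finite sum defining any fixed-degree truncation; the infinite sum is handled by the usual $\mathrm{Novikov}$-adic convergence already built into the statement ``is on the Lagrangian cone.''

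I would then invoke the abelian/convex theorem directly in the form: if $E$ is convex then $\lim_{\kappa\to 0} I^{X\sslash G,E} = I^{X\sslash G, E}|_{\kappa = 0}$ is on the cone of $Y\sslash G$, hence so is its $\bi^*$-image — but since we are \emph{not} assuming convexity, the actual argument must instead compare the given (non-convex) twisted $I$-function with a convex one summand-by-summand, showing their difference lies in the kernel of $\bi^*$ restricted to the cone. For nonabelian $G$ I expect to use the abelianization/quasimap wall-crossing machinery of \cite{Webb21} (and \cite{Wang, JSZ}) to reduce to a maximal torus $T$, where the $I$-function is an explicit hypergeometric sum and the pole-cancellation analysis is transparent; the nonabelian $I$-function is obtained from the abelian one by a known transformation that is compatible with taking $\kappa\to 0$.

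The step I expect to be the main obstacle is precisely the claim that \emph{$\bi^*$ commutes with $\lim_{\kappa\to 0}$ in the appropriate sense on the Lagrangian cone} — i.e. that applying $\bi^*$ to the nonequivariant limit yields a point on the \emph{image} of the cone of $Y\sslash G$ rather than merely on the cone of $X\sslash G$. This requires knowing that the twisted theory of $X\sslash G$ with the $\Gm$-scaling action degenerates, as $\kappa\to 0$, to the untwisted theory of $Y\sslash G$ in a way that is functorial for $\bi$; the subtlety (and the whole point of the paper, as the title suggests) is that without convexity the individual terms $e_{\Gm}(E_\beta^-)^{-1}$ have poles at $\kappa = 0$ that only cancel after restriction, so one must carefully track which part of the $X\sslash G$ cone is killed by $\bi^*$ and verify the surviving part is genuinely in the image. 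I would isolate this as the one lemma requiring real work, with everything else being a bookkeeping assembly of \cite{SW22a, CCIT19, coates-short, Webb21}.
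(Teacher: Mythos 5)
There is a genuine gap: your strategy never identifies what the Lagrangian cone of $Y\sslash G$ actually contains, and the mechanism you propose for handling the poles is the opposite of what the hypothesis gives you. The actual proof rests on two ingredients you do not use. First, the existence of $\lim_{\kappa\to 0} I^{X\sslash G, E}$ \emph{before} restriction is equivalent to a purely combinatorial condition ($I$-convexity: every $I$-effective class $\tilde\beta$ pairs nonnegatively with every weight of $E$), because a pole at $\kappa=0$ in a coefficient of \eqref{eq:twisted} occurs exactly when some factor $c_1(\cL_{\epsilon_j})+\kappa$ with $\tilde\beta(\epsilon_j)\in\ZZ_{<0}$ appears in a denominator. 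So there is no ``cancellation of the $E_\beta^-$ poles up to terms killed by $\bi^*$'' to arrange: the hypothesis forces the poles to be absent outright. Cancellation only after applying $\bi^*$ is precisely the situation of Conjecture \ref{conj:ql}, which the paper shows is false; an argument that only controls the discrepancy modulo $\ker\bi^*$ cannot distinguish the true statement from the false one. Second, once $I$-convexity holds, the $\kappa=0$ specialization of \eqref{eq:twisted} restricted to $Y\sslash G$ coincides term-by-term with the explicit formula \eqref{eq:Ifunc-coeff-convex} for the quasimap $I$-function of the CI-GIT presentation, and that series is on the cone of $Y\sslash G$ by the quasimap mirror theorem of \cite{Webb21, yang} --- a result with \emph{no} convexity hypothesis. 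Your plan instead tries to reduce to the convex quantum Lefschetz theorem of \cite{CCIT19, coates-short} via an auxiliary convex representation $E'$ and functoriality of cones under $\bi$; no such $E'$ is constructed, and even granting one, a comparison ``in the kernel of $\bi^*$'' would only place the restricted limit on (a quotient of) the twisted cone of $X\sslash G$, not on the image of the cone of $Y\sslash G$.

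Relatedly, the step you flag as the main obstacle --- commuting $\bi^*$ with $\lim_{\kappa\to 0}$ --- is not where the difficulty lies: once the limit exists before restriction, each coefficient is a genuine power series in $\kappa$ and restriction trivially commutes with setting $\kappa=0$. The real work is the equivalence of ``limit exists'' with $I$-nonnegativity of all $I$-effective classes (proved by producing quasimaps from covers of $\PP^1_{a,1}$ and applying orbifold Riemann--Roch), plus importing the non-convex mirror theorem. I recommend reorganizing the argument around those two points.
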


In Conjecture \ref{conj:ql} and Theorem \ref{thm:main} we are suppressing a specialization of Novikov variables (see Remark \ref{rmk:nov-rings}).

Conjecture \ref{conj:ql} was used in \cite{OP} to compute quantum periods of orbifold del Pezzo surfaces. In Section \ref{sec:criterion} we give a positivity criterion that can be checked to verify the computations in \cite{OP}, and we check that this criterion holds for the example in \cite[Sec~5.4]{OP}.

\subsection{Acknowledgements} The second author was partially supported by an NSF Postdoctoral Research Fellowship, award number 200213.

\section{Definitions of $I$-functions}

We refer the reader to \cite[Sec~2]{SW22a} for a more detailed exposition of this section. Let $\PP^1_{a, 1}$ be the orbifold line with one stacky point of order $a$.
A \emph{quasimap} to $(X, G, \theta)$ is a representable morphism $q: \wP{a} \to [X/G]$ such that the substack $q^{-1}([X^{us}/G])$ is disjoint from the stacky point in $\wP{a}$, where $X^{us} := X \setminus X^{ss}_\theta(G)$. The \textit{class} of $q$ is the element $\beta \in \Hom(\chi(G), \QQ)$ given by $\beta(\xi) = \deg q^*\cL_\xi$, where $\chi(G)$ is the character group of $G$ and $\cL_\xi$ is the line bundle induced by $\xi$. An element $\beta \in \Hom(\chi(G), \QQ)$ is called \textit{$I$-effective} if it is the class of a quasimap.
 
 Let $T\subset G$ be a maximal torus. Define
 \[
 Y\sslash_G T := [Y^{ss}_\theta(G)/T] \quad \quad \quad  X\sslash_G T := [X^{ss}_\theta(G)/T]
 \]
and let $\varphi: Y\sslash_G T \to Y\sslash G$ be the natural morphism.
There is a group homomorphism
$\Hom(\chi(T), \QQ) \to T$ sending
$\tilde \beta $ to $g_{\tilde \beta}$,
where for $\xi \in \chi(T)$ we have
\begin{equation}\label{eq:def-g}
\xi(g_{\tilde \beta}) = e^{2\pi i \tilde \beta(\xi)}.
\end{equation}

Let $\xi_1, \ldots, \xi_n$ (resp. $\epsilon_1, \ldots, \epsilon_r$, resp. $\rho_1, \ldots, \rho_m$) denote the weights of $X$ (resp. $E$, resp. the Lie algebra of $G$) with respect to $T$. For $\tilde \beta \in \Hom(\chi(T), \QQ)$ we call $\tilde \beta$ \textit{$I$-nonnegative} if the set 
\[
\{ \;\epsilon \in \{\epsilon_j\}_{j=1}^r \;\mid \;\tilde \beta(\epsilon) \in \ZZ_{<0} \;\}
\]
is empty. If $[W/G]$ is any global quotient stack and $g \in G$, we use $\In{[W/G]}_{g}$ to denote the sector of the inertia stack of $[W/G]$ indexed by the conjugacy class of $g$. We define 
\begin{equation}\label{def:x-tilde-beta-intro}
\begin{gathered}
X^{\tilde \beta}:=\{(x_1, \ldots, x_n) \in X \mid x_i = 0 \;\text{for all}\; i \;\text{such that}\; \tilde \beta(\xi_i) \not \in \ZZ_{\geq 0}\}\\
F_{\tilde \beta}(X\sslash T) := X^{\tilde \beta}\sslash_G T \quad \quad \quad F_{\tilde \beta}(Y\sslash T) := (Y \cap X^{\tilde \beta}) \sslash_G T
\end{gathered}
\end{equation} 
The subspace $X^{\tilde \beta} \subset X$ is contained in the fixed locus of $g_{\tilde \beta}^{-1}$, and we view the stacks $F_{\tilde \beta}(X\sslash T)$ and $F_{\tilde \beta}(Y\sslash T)$ as substacks of the $g_{\tilde \beta}^{-1}$-sectors of the respective inertia stacks.

For $\xi \in \chi(T)$ and $W$ a variety with $T$-action, let $\L_\xi$ be the line bundle on $[W/T]$ with total space $[(W \times \AA^1)/T]$ where $T$ acts as given on $W$ and via the character $\xi$ on $\AA^1$. For $\tilde \beta \in \Hom(\chi(T), \QQ)$ we define operational Chow classes on $Y\sslash_G T$ by
\begin{align*}
C^\circ(\tilde\beta, \xi) 
&:= \begin{cases}
\prod_{\tilde \beta(\xi) < k < 0,  \;k-\tilde\beta(\xi) \in \ZZ}^{}(c_1(\L_{\xi}) + kz) & \tilde\beta(\xi) \leq 0\\
\left[\prod_{0 < k \leq \tilde\beta(\xi),  \;k-\tilde\beta(\xi) \in \ZZ}^{}(c_1(\L_{\xi}) + kz)\right]^{-1} & \tilde\beta(\xi) >0.
\end{cases}\\
C(\tilde\beta, \xi) 
&:= \begin{cases}
c_1(\L_{\xi})C^\circ(\tilde\beta, \xi) & \tilde\beta(\xi) \in \ZZ_{< 0}\\
C^\circ(\tilde\beta, \xi) & else.
\end{cases}\\
\end{align*}

\begin{theorem}[ \cite{Webb21}]\label{thm:Ifunc-formula}
The quasimap $I$-function of the CI-GIT presentation $(X, G, \theta, E, s)$ is a series $I(q, z) = \sum_{\delta} q^\delta I_\delta(z)$, where $\delta \in \Hom(\Pic^G(Y), \QQ)$, and for $\beta \in \Hom(\Pic^G(X), \QQ)$ we have
\begin{equation}\label{eq:Ifunc-formula}
\sum_{\delta \mapsto \beta} \varphi^* I_\delta(z) = \sum_{\tilde \beta \mapsto \beta} I_{\tilde \beta}(z) \quad \text{where} \quad I_{\tilde \beta}(z) \in H^\bullet(\In{Y\sslash_G T}_{g_{\tilde \beta}^{-1}}; \QQ)\otimes \QQ[z, z^{-1}]
\end{equation}
where the second sum is over $\tilde \beta \in \Hom(\chi(T), \QQ)$ mapping to $\beta$. Moreover,
\begin{itemize}
\item If $\tilde \beta \in \Hom(\chi(T), \QQ)$ is $I$-nonnegative, we have
\begin{equation}\label{eq:Ifunc-coeff-convex}
I_{\tilde \beta}(z) = \left( \prod_{i=1}^m C(\tilde \beta, \rho_i)^{-1}\right)
\left(\prod_{j=1}^r C(\tilde \beta, \epsilon_j)^{-1}\right)
\left(\prod_{\ell=1}^{n}C(\tilde \beta, \xi_\ell)\right)
\one_{g_{\tilde \beta}^{-1}} 
\end{equation}
where $\one_{g_{\tilde \beta}^{-1}}$ is the fundamental class of the sector $\In{Y\sslash_G T}_{g_{\tilde \beta}^{-1}}$.
\item If the inclusion $F^0_{\tilde \beta}(Y\sslash T) \hookrightarrow F^0_{\tilde \beta}(X\sslash T)$ is l.c.i. of codimension 
\[\# \{\epsilon_j \mid \tilde \beta(\epsilon_j) \in \ZZ_{\geq 0} \},\] 
then \eqref{eq:Ifunc-coeff-convex} holds after replacing $C(\tilde \beta, \epsilon_j)^{-1}$ by $C^\circ(\tilde \beta, \epsilon_j)^{-1}$, replacing $C(\tilde \beta, \xi_j)^{-1}$ by $C^\circ(\tilde \beta, \xi_j)^{-1}$, and replacing $\one_{g_{\tilde \beta}^{-1}}$ by $[F^0_{\tilde \beta}(Y\sslash T)]$.
\end{itemize}
\end{theorem}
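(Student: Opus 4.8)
This is established in \cite{Webb21}; we recall the shape of the argument. The plan is to compute the quasimap $I$-function of the CI-GIT presentation directly from its definition as a generating function of $\Gm$-equivariant virtual integrals over the moduli of quasimaps from $\PP^1_{a,1}$ --- the $\Gm$ rotating $\PP^1_{a,1}$ and fixing its two poles --- by virtual torus localization. Only the fixed locus on which all of the degree is concentrated at the non-stacky pole contributes to the $I$-function, and the value at the stacky point of a quasimap in this fixed locus is an orbifold point of $Y\sslash_\theta G$ lying in the sector of $\In{Y\sslash_\theta G}$ cut out by its monodromy. To expose the $T$-weights one first reduces to the maximal torus: pulling back along $\varphi\colon Y\sslash_G T\to Y\sslash_\theta G$ trades $G$-quasimaps for $T$-quasimaps at the cost of the Euler-class correction coming from the roots, which is the factor $\prod_{i=1}^{m}C(\tilde\beta,\rho_i)^{-1}$; the fixed loci of the resulting $T$-theory are indexed by $\tilde\beta\in\Hom(\chi(T),\QQ)$, the $\tilde\beta$-fixed locus being the $g_{\tilde\beta}^{-1}$-sector over $F_{\tilde\beta}(Y\sslash T)=(Y\cap X^{\tilde\beta})\sslash_G T$.

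The core computation is the inverted equivariant Euler class of the virtual normal bundle of the $\tilde\beta$-fixed locus. Along $Y\sslash_G T$ the perfect obstruction complex of $[Y/G]$ equals $\bigoplus_\ell\L_{\xi_\ell}-\bigoplus_i\L_{\rho_i}-\bigoplus_j\L_{\epsilon_j}$ in $K$-theory, so applying $R\pi_*$ over the universal orbicurve of the fixed locus decomposes the virtual normal bundle into line-bundle summands indexed by these weights together with the $\Gm$-rotation weights $kz$ that occur in $H^\bullet$ over $\PP^1_{a,1}$ of the pullback of each summand --- a line bundle whose degree is the value of $\tilde\beta$ on the corresponding weight. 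Evaluating $H^0-H^1$ of such a line bundle, while tracking its age at the stacky point and which rotation weights (including $0$) appear, produces precisely the factors $C(\tilde\beta,\cdot)$ or $C^\circ(\tilde\beta,\cdot)$ of \eqref{eq:Ifunc-coeff-convex}; the rotation-weight-$0$ summand is what distinguishes $C$ from $C^\circ$ (the extra factor $c_1(\L_\xi)$) and what forces the choice between $\one_{g_{\tilde\beta}^{-1}}$ and $[F^0_{\tilde\beta}(Y\sslash T)]$, according to whether the stacky-point value of the quasimap is constrained to lie in the vanishing locus of the corresponding coordinate of $X$ or component of $s$.

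The main obstacle is the absence of convexity: when some $\tilde\beta(\epsilon_j)<0$ one has $R^1\pi_* f^*\L_{\epsilon_j}\ne 0$ on the $\tilde\beta$-fixed locus, so $\pi_* f^*E$ is not a vector bundle and the moduli of quasimaps to $[Y/G]$ is not the zero locus of a section inside that of $[X/G]$. The content of the CI-GIT hypothesis --- a regular section $s$ with the needed l.c.i.\ behavior, equivalently $F^0_{\tilde\beta}(Y\sslash T)\hookrightarrow F^0_{\tilde\beta}(X\sslash T)$ being l.c.i.\ of codimension $\#\{\epsilon_j\mid\tilde\beta(\epsilon_j)\in\ZZ_{\ge 0}\}$ --- is exactly what forces the virtual class of the $Y$-fixed locus to be the refined Gysin pullback of that of the $X$-fixed locus, whence $\one_{g_{\tilde\beta}^{-1}}$ is replaced by $[F^0_{\tilde\beta}(Y\sslash T)]$ and the numerator/denominator factors indexed by $E$ and by $X$ are replaced by their punctured counterparts $C^\circ$, which drop precisely the rotation-weight-$0$ factors now carried by the refined class. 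When $\tilde\beta$ is $I$-nonnegative this excess is absent and one lands on the first bullet; in general the point to verify is that the stated l.c.i.\ codimension is exactly what makes the refined-class comparison go through, and that the residual $R^1$-contributions of the $\epsilon_j$ with $\tilde\beta(\epsilon_j)<0$ assemble into $C^\circ(\tilde\beta,\epsilon_j)^{-1}$ as written. This last point is the heart of the argument in \cite{Webb21}.
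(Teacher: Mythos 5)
The paper does not prove this statement: it is imported verbatim from \cite{Webb21}, so there is no in-text argument to compare against. Your sketch is a faithful reconstruction of the strategy used there --- defining the $I$-function as the residue contribution of the distinguished $\Gm$-fixed locus where all degree sits at the non-stacky pole, abelianizing via $\varphi$ with the root factors $\prod_i C(\tilde\beta,\rho_i)^{-1}$, reading off the remaining $C$ and $C^\circ$ factors from $H^0-H^1$ of the line-bundle summands of the (co)tangent complex $\bigoplus_\ell \L_{\xi_\ell}-\bigoplus_i\L_{\rho_i}-\bigoplus_j\L_{\epsilon_j}$, and using the l.c.i.\ hypothesis to identify the virtual class of the fixed locus with $[F^0_{\tilde\beta}(Y\sslash T)]$ and strip the weight-zero factors. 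The two genuinely hard steps (the abelianization comparison and the virtual-class functoriality when $R^1\pi_*f^*E\neq 0$) are left as black boxes in your sketch, but that is appropriate here, since the present paper likewise defers the entire proof to \cite{Webb21}.
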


\begin{remark}\label{rmk:nov-rings}
Let $(X, G, \theta, E, s)$ be a CI-GIT presentation and let $I(q, z)$ be the series in Theorem \ref{thm:Ifunc-formula}. Let $r: \Hom(\Pic^G(Y), \QQ) \to \Hom(\Pic^G(X), \QQ)$ be the natural morphism and set $I(r(q), z) = \sum_{\delta} q^{r(\delta)} I_\delta(z)$. Consider the restriction $\mathcal L$ of the Lagrangian cone of $Y\sslash_\theta G$ to the Novikov ring of $\Lambda_{(X, G, \theta)}$ (defined as in \cite[Sec~2.2.2]{SW22a}). By \cite{yang}, the series $I(r(q), z)$ is a point of $\mathcal{L}$.
\end{remark}

On the other hand, let $C(\tilde \beta, \epsilon + \kappa)$ denote the class $C(\tilde \beta, \epsilon)$ but with $c_1(\cL_\epsilon)$ replaced by $c_1(\cL_\epsilon)+\kappa$. 
We define the \textit{equivariant twisted $I$-function} to be the series $I^{X\sslash G, E}(q, z) := \sum_\beta q^\beta I^{X\sslash G, E}_\beta(z)$, where $\beta \in \Hom(\chi(G), \QQ)$ and we set
\begin{equation}\label{eq:twisted}
\varphi^* I^{X\sslash G, E}_\beta(z) := \sum_{\tilde \beta \to \beta} 
\left( \prod_{i=1}^m C(\tilde \beta, \rho_i)^{-1}\right)
\left(\prod_{j=1}^r C(\tilde \beta, \epsilon_j+\kappa)^{-1}\right)
\left(\prod_{\ell=1}^{n}C(\tilde \beta, \xi_\ell)\right)
\one_{g_{\tilde \beta}^{-1}} 
\end{equation}
Note that when $G$ is abelian, this agrees with \cite[(22)]{OP} with $\tau=0$. It is shown in \cite[Sec~4]{CCIT19} that when $G$ is abelian the series $I^{X\sslash G, E}$ lies on the $\Gm$-equivariant twisted Lagrangian cone of $X\sslash G$.

\begin{remark}\label{rmk:homogeneity}
By \cite[Thm~3.9(2)]{CCK15}, the $I$ function of $(X, G, \theta, E, s)$ is homogeneous of degree 0 when we set 
\begin{itemize}
\item $\deg(z) = 1$,
\item $\deg(q^\beta) = \beta(\det(X)\det(\mathfrak{g})\det(E))$ where $\det(\cdot)$ is the character of the determinant of the representation $\cdot$ and $\mathfrak{g}$ is the adjoint representation of $G$, and 
\item $\deg(\alpha) = k + \age_{Y\sslash G}(g)$ for $\alpha \in H^k(\In{Y\sslash G}; \QQ)$ (the age $\age_{Y\sslash G}(g)$ is the rational number defined as in \cite[Sec~2.5.1]{CCK15}). 
\end{itemize}
A direct computation shows that \eqref{eq:twisted} is also homogeneous of degree 0 when in addition we set $\deg(\kappa)=1$.
\end{remark}

\section{Definitions of convexity}
Let $E_{X\sslash G}$ denote the vector bundle on $X\sslash_\theta G$ induced by $E$.
Recall that $E_{X\sslash G}$ is \textit{convex} if for every genus $0$ twisted stable map $f: \cC \to \cX$ (with any number of marked points) we have that $H^1(\cC, f^*E_{X\sslash G})=0.$  
By contrast, we make the following definition which depends on the GIT data $(X, G, \theta)$ and not just on the quotient $X\sslash_\theta G$.

\begin{definition}\label{def:q-convex}
Let $(X, G, \theta)$ be a GIT presentation and let $E$ be a $G$-representation. Let $T \subset G$ be a maximal torus. We say that $E$ is \emph{$I$-convex} if every class $\tilde\beta \in \Hom(\chi(T), \QQ)$ mapping to an $I$-effective class in $\Hom(\chi(G), \QQ)$ is $I$-nonnegative.
\end{definition}
\begin{remark}\label{rmk:well-defined}
Definition \ref{def:q-convex} does not depend on the choice of maximal torus. This is because all maximal tori in $G$ are conjugate and conjugation also acts compatibly on $I$-effective classes and weights.
\end{remark}

\begin{proposition}\label{prop:q-convex}
Let $(X, G, \theta)$ be a GIT presentation and let $E$ be a $G$-representation. The following are equivalent:
\begin{enumerate}
\item $E$ is $I$-convex.
\item $E$ is nonnegative; i.e., for some maximal torus $T \subset G$ we have that $\tilde \beta(\epsilon_i) \geq 0$ for every $\tilde \beta$ mapping to an $I$-effective class and every weight $\epsilon_i$ of $E$.
\item For every quasimap $q:\PP^1_{a, 1} \to [X/G]$, we have $H^1(\PP^1_{a, 1}, q^*E_{[X/G]}) = 0.$ 
\item The nonequivariant limit $\lim_{\kappa \to 0} I^{X\sslash G, E}$ exists.
\end{enumerate}
\end{proposition}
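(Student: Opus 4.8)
The four conditions split into a pair of lattice statements, (1) and (2); a geometric statement, (3); and a pole-counting statement, (4), about the formula \eqref{eq:twisted}. I would prove $(2)\Rightarrow(1)$, $(1)\Rightarrow(2)$, $(2)\Leftrightarrow(3)$, $(1)\Rightarrow(4)$ and $(4)\Rightarrow(1)$. The tool behind the first two and behind "$(3)\Rightarrow(2)$" is a scaling observation: if $\tilde\beta$ is a relevant lift (one occurring in \eqref{eq:Ifunc-formula}, equivalently one realized by a quasimap to the $T$-quotient) and $g_{\tilde\beta}$ has order $a$, then $a\tilde\beta$ is again a relevant lift --- it has trivial monodromy and, after "unwinding" the stacky point, represents an ordinary effective curve class --- and moreover $a\tilde\beta(\epsilon_j)\in\ZZ$ for every $j$. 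So from "$\tilde\beta(\epsilon_j)<0$" one obtains "$a\tilde\beta(\epsilon_j)\in\ZZ_{<0}$".

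Now $(2)\Rightarrow(1)$ is immediate, since $\tilde\beta(\epsilon_j)\ge 0$ forces $\tilde\beta(\epsilon_j)\notin\ZZ_{<0}$, and the maximal torus may be changed freely by Remark \ref{rmk:well-defined}. For $(1)\Rightarrow(2)$: were $E$ $I$-convex with some relevant lift $\tilde\beta$ of an $I$-effective class satisfying $\tilde\beta(\epsilon_j)<0$, the scaling observation would make $a\tilde\beta$ a relevant lift of an $I$-effective class that is not $I$-nonnegative, a contradiction. For $(2)\Leftrightarrow(3)$, recall that a vector bundle on the smooth orbifold curve $\PP^1_{a,1}$ is a sum of line bundles, so $H^1(\PP^1_{a,1},q^*E_{[X/G]}) = \bigoplus_j H^1(\PP^1_{a,1},L_j)$, and by Serre duality on $\PP^1_{a,1}$ one has $H^1(\PP^1_{a,1},L)=0$ iff $\deg L\ge -1$. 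Reducing the structure group of $q$ to a Borel subgroup identifies the $L_j$ so that $\deg L_j = \tilde\beta(\epsilon_j)$ for the relevant lift $\tilde\beta$ attached to the reduction --- there is no base-point correction, since $\beta(\xi)=\deg q^*\cL_\xi$ is by definition the honest degree. Hence (3) amounts to $\tilde\beta(\epsilon_j)\ge -1$ for all $j$ and all relevant lifts of $I$-effective classes; the scaling observation upgrades this to $\ge 0$, i.e.\ (2), while (2) conversely gives $\deg L_j\ge 0$ for every quasimap, hence (3).

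For $(1)\Leftrightarrow(4)$: the only $\kappa$-dependence in \eqref{eq:twisted} is through the factors $C(\tilde\beta,\epsilon_j+\kappa)^{-1}$. The factor $C^\circ(\tilde\beta,\epsilon_j+\kappa)^{-1}$ is regular at $\kappa=0$ for every $\tilde\beta$ (it is built from factors $c_1(\cL_{\epsilon_j})+\kappa+kz$ with $k\ne 0$), while when $\tilde\beta(\epsilon_j)\in\ZZ_{<0}$ one has $C(\tilde\beta,\epsilon_j+\kappa)=(c_1(\cL_{\epsilon_j})+\kappa)\,C^\circ(\tilde\beta,\epsilon_j+\kappa)$, contributing a pole $(c_1(\cL_{\epsilon_j})+\kappa)^{-1}$ at $\kappa=0$. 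So a summand of \eqref{eq:twisted} is regular at $\kappa=0$ exactly when $\tilde\beta$ is $I$-nonnegative; if $E$ is $I$-convex every summand is regular and $\lim_{\kappa\to 0}I^{X\sslash G,E}$ exists coefficient-by-coefficient, giving $(1)\Rightarrow(4)$. For $(4)\Rightarrow(1)$ I must rule out cancellation of poles among the finitely many summands contributing to a fixed $q^\beta$: summands with distinct $g_{\tilde\beta}$ lie in distinct sectors of $\In{Y\sslash_G T}$ and cannot cancel, and for lifts sharing a $g_{\tilde\beta}$ the homogeneity of Remark \ref{rmk:homogeneity} forces all summands of a given pole order in $\kappa$ to carry the same power of $z$, after which the leading Laurent coefficient can be computed directly and is nonzero; this is vacuous when $G$ is abelian, as then each $q^\beta$-coefficient is a single term.

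The genuine obstacles I anticipate are (i) the no-cancellation step in $(4)\Rightarrow(1)$ for nonabelian $G$, and (ii) making the scaling observation precise --- in particular verifying that "unwinding" the order-$a$ stacky point of $\PP^1_{a,1}$ really produces an honest quasimap representing $a\tilde\beta$, which rests on the description of the $I$-effective cone in \cite{Webb21, SW22a}.
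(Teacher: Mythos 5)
Your proposal is correct and follows essentially the same route as the paper: the scaling/unwinding of a lift $\tilde\beta$ to the integral, $I$-effective class $a\tilde\beta$ for $(1)\Leftrightarrow(2)$, the factorization through $[X/T]$ plus orbifold Riemann--Roch for the $H^1$ statement, and the pole analysis of the factors $C(\tilde\beta,\epsilon_j+\kappa)^{-1}$ for $(1)\Leftrightarrow(4)$. The only organizational difference is that the paper closes the cycle as $(1)\Rightarrow(2)\Rightarrow(3)\Rightarrow(1)$ (so that $(3)$ need only yield $\tilde\beta(\epsilon_j)>-1$ via a degree-$2$ cover, rather than your stronger $(3)\Rightarrow(2)$), and the paper does not explicitly address the potential cancellation of $\kappa$-poles among summands that you rightly flag in $(4)\Rightarrow(1)$.
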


\begin{remark}
In Proposition \ref{prop:q-convex} we could replace ``some maximal torus'' with ``every maximal torus'' in condition (2), for the same reason as in Remark \ref{rmk:well-defined}.
\end{remark}

\begin{proof}[Proof of Proposition \ref{prop:q-convex}]
We first show equivalence of (1) and (4). By definition, $E$ fails to be $I$-convex if and only if some $C(\tilde \beta, \epsilon_j + \kappa)^{-1}$ contains a factor of the form $c_1(\cL_{\epsilon_j})+\kappa$ in its denominator. (The remaining factors of the denominator all have the form $c_1(\cL_{\epsilon_j})+\kappa+ kz$ for some $k \neq 0$.) Expanding $C(\tilde \beta, \epsilon_j + \kappa)^{-1}$ as a Laurent series in $\kappa$, we see that $\lim_{\kappa \to 0} C(\tilde \beta, \epsilon_j+\kappa)^{-1}$ does not exist precisely when we have a denominator of the form $c_1(\cL_{\epsilon_j})+\kappa$.

Assume (1). Suppose for contradiction that we have $\tilde \beta(\epsilon_i) = c < 0$ for some $\tilde \beta$ mapping to an $I$-effective class and for some $\epsilon_i$. This means we have a representable morphism $q: \PP^1_{a, 1} \to [X/T]$ of class $\tilde \beta$ with finite base locus, and we have $a\tilde \beta \in \Hom(\chi(T), \ZZ)$ by \cite[Lem~4.6]{CCK}. By precomposing with the finite representable cover
\[
\PP^1_{1, 1} \to \PP^1_{a, 1} \quad \quad \quad [u:v] \mapsto [u^a: v] \quad t \mapsto t
\]
we get a representable morphism $q': \PP^1_{1, 1} \to [X/G]$ of class $a\tilde \beta$, so $a\tilde \beta$ is $I$-effective. But $a\tilde \beta(\epsilon_i) = ac \in \ZZ_{<0}$, a contradiction. Hence we must have $c \geq 0$ and (2) holds

Assume (2). Let $q: \PP^1_{a, 1} \to [X/G]$ be a quasimap. Then $q$ factors through a quasimap $\tilde q: \PP^1_{a, 1} \to [X/T]$ (see \cite[Rmk~4.1.1]{Webb21}).
Let $\tilde \beta$ denote the class of $\tilde q$ and let $p: [X/T] \to [X/G]$ be the natural quotient map. Then $p^*E_{[X/G]}$ splits as $\oplus_i \L_{\epsilon_i}$ and $\tilde q^*\L_{\epsilon_i}$ has degree $\tilde \beta(\epsilon_i)\geq 0$. By orbifold Riemann-Roch \cite[Thm~7.2.1]{AGV08} we have $H^1(\PP^1_{a, 1}, \tilde q^*\L_{\epsilon_i})$ = 0 . Hence (3) holds.

Assume (3). Suppose $\tilde \beta \in \Hom(\chi(T), \QQ)$ maps to an $I$-effective class. By \cite[Lem~4.3.2]{SW22a} there is a morphism $\tilde q: \PP^1_{a, 1} \to [X/T]$ of class $\tilde \beta$ such that $q:= p \circ \tilde q$ is a quasimap to $[X/G]$ (where $p$ is as defined in the previous paragraph). If $p^*E_{[X/G]} = \oplus_i \L_{\epsilon_i}$, then we know $H^1(\PP^1_{a, 1}, \tilde q^*\L_{\epsilon_i} )=0$ for each $i$, so orbifold Riemann-Roch tells us $\tilde \beta(\epsilon_i) = \deg \tilde q^*\L_{\epsilon_i} \geq -1$. Consider the degree-2 cover $f: \PP^1_{a, 1} \to \PP^1_{a, 1}$ given by squaring each of the homogeneous coordinates. The composition $q \circ f$ is a quasimap to $[X/G]$ such that the associated lift has class $2 \tilde \beta$. If $\tilde \beta(\epsilon_i) = -1$ then $2\tilde \beta(\epsilon_i) = -2 < -1$ and we have a contradiction. Hence $\tilde \beta(\xi_i) > -1$ and (1) holds.

\end{proof}

\begin{remark}
If a class $\tilde \beta$ is $I$-nonnegative and $q$ is a quasimap of class $\tilde \beta$ it is still possible that $H^1(\PP^1_{a, 1}, q^*E_{[X/G]}) \neq 0$. Compare this with Proposition \ref{prop:q-convex} part (3).
\end{remark}

Theorem \ref{thm:main} follows immediately from the proposition.
\begin{proof}[Proof of Theorem \ref{thm:main}]
If the nonequivariant limit $\lim_{\kappa \to 0} I^{X\sslash G, E}$ exists, then $E$ is $I$-convex by Proposition \ref{prop:q-convex}, and by Theorem \ref{thm:Ifunc-formula} we have that $\bi^*\lim_{\kappa \to 0} I^{X\sslash G, E}$ is equal to the quasimap $I$-function of $(X, G, \theta, E, s)$. This $I$-function is on the Lagrangian cone of $Y\sslash_\theta G$ up to a restriction of Novikov rings (Remark \ref{rmk:nov-rings}).
\end{proof}

\begin{notation}
Let $X = \AA^n$ be a representation of $G = (\Gm)^k$. We will identify the weights of the representation with elements of $\ZZ^k$. If $k=1$ we will write $\OO_{X\sslash G}(d)$ for the line bundle $\L_{\sigma_d}$ where $\sigma_d$ is the character of $\Gm$ of weight $d$.
\end{notation}

\begin{example}[$I$-convexity does not imply convexity]
Let $G=\Gm$ act on $X=\AA^3$ with weights $(1,1,3)$, and let $\theta$ be the identity character. Then $X\sslash_\theta G$ is equal to the weighted projective space $\PP(1,1,3)$. Let $E=\AA^1$ have weight $k$. A computation using orbifold Reimann-Roch shows that a line bundle on $X\sslash G$ is convex if and only if it is the pullback of a nef bundle on the coarse space of $X\sslash G$. Hence $E_{X\sslash G} = \OO_{\PP(1,1,3)}(k)$ is convex if and only if $k \in 3\ZZ_{\geq 0}$, but $E$ is $I$-convex if and only if $k \geq 0$.
\end{example}

\begin{example}[Convexity does not imply $I$-convexity]\label{ex:weird-one}
Let $X=\AA^5$ and $G = (\Gm)^2$ and let $G$ act on $X$ with weights equal to the columns of the matrix
\[
\left(\begin{array}{ccccc}
1 & 1 & 1 & 3 & 0\\
0 & 0 & 0 & 1 & 1
\end{array}\right).
\]
Let $\theta$ be the product of the two projection characters. Then $X\sslash_\theta G$ is the weighted projective space $\PP(1,1,1,3)$. If we choose $E = \AA^1$ to have weight $(6, 1)$, then $E_{X\sslash G} = \OO_{\PP(1,1,1,3)}(6)$ is convex (here we are using that $\PP(1,1,1,3)$ is also the GIT quotient of $\AA^4$ by $\Gm$ acting with weights $(1,1,1,3)$). 

Let $e_1, e_2$ be the projection characters for $G$ and identify $\tilde \beta \in \Hom(\chi(G), \QQ)$ with $(\tilde \beta(e_1), \tilde \beta(e_2)) \in \QQ^2$. The class $\tilde \beta = (-1/3, 1)$ is $I$-effective but and $(-1/3)6 + 1 = -1$, so $E$ is not $I$-convex.
Hence by Proposition \ref{prop:q-convex}, the nonequivariant limit $\lim_{\kappa \to 0} I^{X\sslash G, E}$ does not exist, despite the fact that $E_{X\sslash G}$ is convex. 
This is initially disturbing; note, however, that no section of $E_{[X/G]}$ has smooth vanishing locus on $X\sslash G$, so the data $(X, G, \theta, E)$ is not part of a CI-GIT presentation.

It is not clear to us if, for a CI-GIT presentation, convexity implies $I$-convexity.

\end{example}

\section{A counterexample to Conjecture \ref{conj:ql}}

\label{sec:failure}
 In this section we study a general quartic hypersurace in $\PP(1,1,1,3)$. 
A CI-GIT presentation for this target (corresponding to the extended stacky fan considered in \cite[Sec~5.4]{OP}) consists of the following data: 
\begin{itemize}
\item $X$, $T:=G$, and $\theta$ are as in Example \ref{ex:weird-one}
\item $E$ is the 1-dimensional $T$-representation with weight $(4, 1)$
\item $s = x_5( x_2^4 + x_3^4) + x_1x_4$ 
\end{itemize}

Define $Y$ to be the zero locus of $s$, let
$e_1, e_2$ be the projection characters of $T$ and for $ \beta \in \Hom(\chi(T), \QQ)$ set $ \beta_i =  \beta(e_i)\in \QQ$; i.e., we have fixed an isomorphism $\Hom(\chi(T), \QQ) \simeq \QQ^2$. As computed in \cite[Sec~5.4]{OP} the $I$-effective classes are
\begin{equation}\label{eq:toric-ieff}
\{  \beta \in (1/3)\ZZ \times \ZZ \mid 3 \beta_1+\beta_2 \geq 0 \;\text{and}\; \beta_2 \geq 0\} \subset \QQ^2.
\end{equation}
For every $I$-effective $ \beta$, the subspace $X^{ \beta}$ contains the subspace of $X$ where $x_1=x_2=x_3=0$. This locus is in the base locus of the linear system defining $Y$, and hence $X^{ \beta} \cap Y^{ss}(G) \neq \emptyset$.

Let $I_\beta(r(q), z)$ denote the coefficient of $q^\beta$ in $I(r(q), z)$ (see Remark \ref{rmk:nov-rings}).
From \eqref{eq:toric-ieff} we see that the class $(-1,3)$ is effective, but it is not $I$-nonnegative since its pairing with the weight $(4, 1)$ is equal to -1. 
To compute $I_{(-1, 3)}(z)$ using Theorem \ref{thm:Ifunc-formula}, we note that if $\beta = (-1, 3)$ then $X^{ \beta}$ is equal to the subspace of $X$ where $x_1=x_2=x_3=0$. Hence $F_{(-1,3)}(X\sslash T)$ is equal to the substack of the untwisted sector of $X\sslash T$ with $\bmu_3$ isotropy. Since this is contained in the base locus of the linear system defining $Y$, we have $F_{(-1,3)}(Y\sslash T) = F_{(-1,3)}(X\sslash T)$, and the hypothesis in Theorem \ref{thm:Ifunc-formula} is satisfied. We obtain 
\begin{equation}\label{eq:toric-evidence}
I_{(-1,3)}(z) = \frac{1}{3!z^3}[B\bmu_3]_1
\end{equation}
where $1 \in G$ is the identity and $[B\bmu_3]_1$ is the fundamental class of the orbifold point in the untwisted sector $\In{Y\sslash T}_1$.

On the other hand, as shown in \cite[Sec~5.4]{OP} the nonequivariant limit of $\bi^*I(r(q), z)$ exists. If Conjecture \ref{conj:ql} were true we would have that 
\begin{equation}\label{eq:twisted-term}
I_{(-1, 3)}(z) = \lim_{\kappa \to 0} \bi^*I^{X\sslash T, E}_{(-1, 3)}(z) = \lim_{\kappa \to 0} \bi^* \frac{H^3}{3!(H+\kappa)z^3} = \lim_{\kappa \to 0} \frac{0}{3!\bi^*(H+\kappa)z^3}=0.
\end{equation}
Here, $H$ is the hyperplane class on $\PP(1,1,1,3)$ and $\bi^*H^3=0$ because $Y$ has dimension 2.

\begin{lemma}\label{lem:conj-is-false} Conjecture \ref{conj:ql} is false, and the ``$I$-function'' computed for $Y\sslash_\theta G$ in \cite[Sec~5.4]{OP} does not lie on the Lagrangian cone of $Y\sslash_\theta G$.
\end{lemma}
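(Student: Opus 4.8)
The plan is to derive a contradiction from the two computations already in hand, namely the formula $I_{(-1,3)}(z) = \frac{1}{3!z^3}[B\bmu_3]_1$ from \eqref{eq:toric-evidence} (valid by Theorem \ref{thm:Ifunc-formula}, since the relevant fixed locus lies in the base locus so the l.c.i.\ hypothesis holds trivially) and the vanishing $\lim_{\kappa\to 0}\bi^* I^{X\sslash T,E}_{(-1,3)}(z) = 0$ from \eqref{eq:twisted-term}. If Conjecture \ref{conj:ql} held, these two classes would have to agree (the conjecture asserts $\lim_{\kappa\to 0}\bi^* I^{X\sslash G,E}$ lies on the Lagrangian cone, and together with Remark \ref{rmk:nov-rings}, which puts $I(r(q),z)$ on that same cone, one compares coefficients of like $q$-powers). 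So the crux is to argue that $\frac{1}{3!z^3}[B\bmu_3]_1 \neq 0$ in the relevant cohomology, i.e.\ that the fundamental class of the $\bmu_3$-gerbe component of the untwisted inertia sector $\In{Y\sslash_\theta G}$ is nonzero. This is clear: $[B\bmu_3]$ is a nonzero class (it is the fundamental class of a nonempty component of the inertia stack), and dividing by $3!z^3$ keeps it nonzero in $H^\bullet\otimes\QQ[z,z^{-1}]$.

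First I would pin down the comparison mechanism precisely. Conjecture \ref{conj:ql} and Remark \ref{rmk:nov-rings} both assert membership on (the image of, after the Novikov specialization) the Lagrangian cone $\mathcal L$ of $Y\sslash_\theta G$. Two points of the cone that agree after a common Novikov specialization and whose difference, when expanded in the Novikov variable, has a vanishing coefficient in one $q$-degree but a nonvanishing coefficient in another would violate the defining property of the cone — more directly, here one has two putative points $\lim_{\kappa\to 0}\bi^* I^{X\sslash G, E}$ and $\bi^*\lim_{\kappa\to 0} I^{X\sslash G, E}$ of $\mathcal L$ (the latter by Theorem \ref{thm:main}!) that are genuinely different series, with the discrepancy isolated in the $q^{(-1,3)}$ coefficient. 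So it suffices to exhibit one $q$-coefficient where they disagree, and \eqref{eq:toric-evidence} versus \eqref{eq:twisted-term} does exactly that.

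The second half of the statement — that the ``$I$-function'' of \cite[Sec~5.4]{OP} does not lie on the Lagrangian cone of $Y\sslash_\theta G$ — follows by the same token: that series is by construction $\lim_{\kappa\to 0}\bi^* I^{X\sslash T, E}(r(q),z)$, its $q^{(-1,3)}$-coefficient is $0$ by \eqref{eq:twisted-term}, but the genuine quasimap $I$-function (which is on the cone, by \cite{SW22a} together with Remark \ref{rmk:nov-rings}) has $q^{(-1,3)}$-coefficient $\frac{1}{3!z^3}[B\bmu_3]_1 \neq 0$; since the cone is the graph of a function (on the appropriate formal neighborhood) any two points with the same "input" data but different outputs cannot both lie on it, and here the lower-order-in-$q$ terms of the two series agree (both being determined by $I$-nonnegative classes via \eqref{eq:Ifunc-coeff-convex}), forcing the conclusion.

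The main obstacle I anticipate is bookkeeping rather than conceptual: making sure the Novikov-ring identifications in Remark \ref{rmk:nov-rings} line up so that ``the $q^{(-1,3)}$ coefficient'' means the same thing on both sides of the comparison, and checking that no cancellation against other $\tilde\beta \mapsto (-1,3)$ terms occurs in \eqref{eq:Ifunc-formula} — but in this toric ($T=G$) situation the sum over $\tilde\beta\mapsto\beta$ has a single term, so \eqref{eq:toric-evidence} is the whole coefficient. The only other subtlety is confirming $[B\bmu_3]_1 \neq 0$ as a class on $\In{Y\sslash_\theta G}$ rather than on $\In{X\sslash_G T}$; since $F_{(-1,3)}(Y\sslash T) = F_{(-1,3)}(X\sslash T)$ by the base-locus observation, this component is nonempty and its fundamental class is nonzero, so we are done.
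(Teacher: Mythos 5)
Your raw ingredients are the right ones (the comparison of \eqref{eq:toric-evidence} with \eqref{eq:twisted-term}, and the observation that $[B\bmu_3]_1\neq 0$), but the logical step you lean on --- that two points of the cone which differ in one $q$-coefficient cannot both lie on it --- is not a property of the Lagrangian cone, and closing that gap is exactly where the paper's proof does all of its work. The cone contains many distinct points, so exhibiting one coefficient where $K:=\lim_{\kappa\to 0}\bi^* I^{X\sslash T,E}$ and $I$ disagree proves nothing by itself. What does hold is that a point of the form $\one+O(z^{-1})$ on the cone is determined by its mirror map $[z(\cdot)-z]_+$; hence to conclude $K=I$ (and only then derive a contradiction from the $(-1,3)$ coefficient) you must first prove $[zK-z]_+=[zI-z]_+$ as full series in $q$. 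Your justification --- that ``the lower-order-in-$q$ terms agree, both being determined by $I$-nonnegative classes'' --- does not do this: the mirror map is a truncation in powers of $z$, not of $q$; it must agree in \emph{every} $q$-degree; and there are infinitely many $I$-effective classes that are not $I$-nonnegative (e.g.\ $(-2,6)$), each of which could a priori contribute a $z^{-1}$ term to $I$ but not to $K$ and thereby change the mirror map. The paper rules this out with the homogeneity bookkeeping of Remark \ref{rmk:homogeneity}: since $\deg(q^\delta)\geq 1/3$ for nonzero effective $\delta$ and the ages occurring on $Y\sslash T$ are $0$, $2/3$, $4/3$, the only non-$I$-nonnegative class that could contribute to a $z^{-1}$ coefficient is $(-1,3)$ itself, and its contribution \eqref{eq:toric-evidence} lives in $z^{-3}$ (its cohomological degree is too high), so it drops out of both truncations. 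Without some version of this argument your proof does not go through.

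A secondary error: you invoke Theorem \ref{thm:main} to place $\bi^*\lim_{\kappa\to 0}I^{X\sslash G,E}$ on the cone, but in this example $E$ is not $I$-convex (the class $(-1,3)$ pairs with the weight $(4,1)$ to give $-1$), so by Proposition \ref{prop:q-convex} the nonequivariant limit before restriction does not exist and Theorem \ref{thm:main} says nothing here. The only two series in play are $K$ (the limit taken after restriction) and the genuine quasimap $I$-function of the CI-GIT presentation, which is on the cone by Remark \ref{rmk:nov-rings}.
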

\begin{proof} 
If $P$ is a formal power series in $z$ and $z^{-1}$, let $[P]_+$ denote the truncation of $P$ to nonnegative powers of $z$. For ease of notation set $K(r(q), z):= \lim_{\kappa \to 0}\bi^* I^{X\sslash T, E}(r(q), z)$. We will show that 
\begin{equation}\label{eq:mirror-maps}
[zK(r(q), z) - z]_+ = [zI(r(q), z)-z]_+.
\end{equation}
Since $I$ is on the Lagrangian cone of $Y\sslash_\theta T$, this implies that if $K$ is also on the cone we must have $K=I$. Since the coefficients of $(-1,3)$ differ by the previous discussion, we will have a contradiction.

To show \eqref{eq:mirror-maps}, note that the definitions of $K_{\beta}$ and $I_{\beta}$ differ only when $\beta \in \Hom(\Pic^T(X), \QQ)$ is not $I$-nonnegative. So it is enough to show that only $I$-nonnegative classes contribute to the series in \eqref{eq:mirror-maps}.
Now we use homogeneity of $K$ and $I$ (Remark \ref{rmk:homogeneity}). The character of 
\[\det(T_{X\sslash T})\det(E_{X\sslash T})^{-1}\] 
is $(2, 1)$, and the set \eqref{eq:toric-ieff} is additively generated over $\ZZ_{\geq 0}$ by $(-1/3, 1)$ and $(1,0)$, so we have $\deg(q^{\delta}) = \deg(q^\delta) \geq 1/3$ for an $I$-effective class $\delta \in \Hom(\Pic^T(Y), \QQ)$. Now homogeneity implies $I(r(q), z) = \one+  O(z^{-1})$ and similarly for $K(r(q), z).$ Hence a summand of $I$ or $K$ contributing to the series in \eqref{eq:mirror-maps} will contain $z$ to the power $-1$.

To compute the coefficient of $z^{-1}$ we use homogeneity again: a term of $I$ or $K$ with $z^{-1}$ must also contain a variable $q^{\beta}$ and a class $\alpha \in H^k(\In{Y\sslash T}_{g_{ \beta}^{-1}}; \QQ)$ with $\deg(q^{ \beta}) + k + \age_{Y\sslash T}(g_{ \beta}^{-1})=1$.
The ages of elements of $T$ that have nonempty fixed locus on $Y^{ss}_\theta(T)$ are
\[
\age_{Y\sslash T}(1,1) = 0 \quad \quad \quad \age_{Y\sslash T}(\zeta_3, 1) = 2/3 \quad \quad \quad \age_{Y\sslash T}(\zeta_3^2, 1) = 4/3.
\]
Hence the only possibilities are $\deg(q^{ \beta}) = 1$, $g_{\beta}^{-1} = (1,1)$, and $k=0$; or $\deg(q^{\beta}) = 1/3$, $g_{ \beta}^{-1} = (\zeta_3,1)$, and $k=0$. The only element of \eqref{eq:toric-ieff} with $\deg(q^{ \beta}) = 1/3$ is $(-1/3, 1)$, which is $I$-nonnegative. The elements of \eqref{eq:toric-ieff} with $\deg(q^{\beta}) = 1$ are $(0,1)$ and $(-1,3)$. Of these, only $(-1,3)$ is not $I$-nonnegative, and \eqref{eq:toric-evidence} shows that it does not contribute to $[zI-z]_{+}$ because the codimension $k$ is too high. Likewise \eqref{eq:twisted-term} shows that $[zK_{(-1, 3)} - z]_+=0$.

\end{proof}

\section{Quantum period computations in the literature}\label{sec:criterion}
The paper \cite{OP} uses Conjecture \ref{conj:ql} to compute quantum periods of several Fano orbifolds. These computations are likely valid despite the failure of the conjecture, and in this section we explain exactly what must be checked. We first observe that \cite{SW22a} verifies the only nonabelian calculation in \cite{OP}, so in this section we will only consider abelian setups. We continue to assume that $Y\sslash_\theta T$ is a stack with a CI-GIT presentation $(X, G, \theta, E, s).$

\begin{definition}
Let $(X, T, \theta, E, s)$ be a CI-GIT presentation for $Y\sslash_\theta T$. The stack $Y\sslash_\theta T$ is a \emph{Fano orbifold} if 
\begin{itemize}
\item The stacky locus in $Y\sslash_\theta T$ has codimension at least 2, and
\item The coarse moduli space $\overline{Y}$ of $Y\sslash_\theta T$ is a projective variety such that the anticanonical divisor $-K_{\overline{Y}}$ is an ample $\QQ$-Cartier divisor.
\end{itemize}
\end{definition}

The paper \cite{many} defines the \emph{quantum period} of a Fano orbifold to be a certain series determined by the \emph{J-function} of the orbifold. For us, the following partial definitions of these series will suffice.

\begin{definition}\label{def:qperiod}
The \emph{J-function} of $Y\sslash_\theta T$ is a series $J^{Y\sslash T}(\tau, q, z)$ satisfying 
\[
J^{Y\sslash T}(\mu(q, z), q, z) = I(q, z) \quad \quad \quad\text{where}\quad \quad \quad \mu(q, z) = [zI(q, z)-z]_+.
\]
If  $Y\sslash_\theta T$ is a Fano orbifold, its \emph{quantum period} is a series of the form
\[
[J^{Y\sslash T}(\gamma(t, x), t^{-K^{Y\sslash T}}, 1)]_{\one},
\]
where $\gamma$ is a certain series in formal variables $t$ and $x$ valued in $H^\bullet_{\CR}(Y\sslash T, \QQ)$, the line bundle $K^{Y\sslash T}$ is the canonical bundle of $Y\sslash T$, and $[\cdot]_\one$ means the coefficient of $\one$ in the cohomology-valued series $\cdot$.
\end{definition}

\begin{lemma}\label{lem:op-ok}
Let $(X, T, \theta, E, s)$ be a CI-GIT presentation for a Fano orbifold with associated $I$-function $I(r(q), z)$. Assume that if $ \delta \in \Hom(\Pic^T(Y), \QQ)$ is $I$-effective and nonzero, then $\deg(q^{ \delta}) > 0$. Assume moreover that for all $ \beta \in \Hom(\chi(T), \QQ)$ that are not $I$-nonnegative,
\begin{enumerate}
\item The limit $\lim_{\kappa \to 0} \bi^* I^{X\sslash T, E}_{\beta}$ exists and equals zero.
\item The degree of the Poincar\'e dual of $[F_{ \beta}(Y\sslash T)]$ is at least $1$; that is,
\[
\age(g_{ \beta}^{-1}) + \mathrm{codim}_{\In{Y\sslash T}_{g_{ \beta}^{-1}}} F_{ \beta}(Y\sslash T) \geq 1
\] 
\end{enumerate}
Then the ``quantum period'' computed from $I^{X\sslash T, E}$ using Conjecture \ref{conj:ql} is equal to the quantum period for $Y\sslash T$.
\end{lemma}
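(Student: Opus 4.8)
The plan is to show that the procedure applied in \cite{OP} — namely, running the Birkhoff factorization and quantum period recipe of \cite{many} on the series $K(r(q),z) := \lim_{\kappa\to0}\bi^* I^{X\sslash T, E}(r(q),z)$ rather than on the genuine $I$-function $I(r(q),z)$ — returns the correct quantum period. First I would check that $K(r(q),z)$ is a well-defined formal series: for $I$-nonnegative $\beta \in \Hom(\chi(T),\QQ)$ the factors $C(\beta,\epsilon_j+\kappa)^{-1}$ in \eqref{eq:twisted} have no pole at $\kappa=0$, so $K_\beta$ exists and in fact equals the corresponding coefficient of the quasimap $I$-function of the CI-GIT presentation, both being \eqref{eq:Ifunc-coeff-convex} (compare the proof of Theorem \ref{thm:main}); for $\beta$ not $I$-nonnegative the limit exists and $K_\beta = 0$ by hypothesis (1). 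Since the quantum period of $Y\sslash_\theta T$ is built only from its $J$-function, which by Definition \ref{def:qperiod} is characterized through $I(r(q),z)$ and the mirror map $\mu(q,z) = [zI(r(q),z)-z]_+$, it suffices to prove: (a) the two mirror maps coincide, $[zK(r(q),z)-z]_+ = [zI(r(q),z)-z]_+$; and (b) the $\one$-component of the genuine $J$-function and of the faulty $\widetilde J$ — characterized by $\widetilde J(\mu(q,z),q,z) = K(r(q),z)$ with the same $\mu$ — agree after the specialization $q \mapsto t^{-K^{Y\sslash T}}$, $z=1$, $\tau = \gamma(t,x)$ of Definition \ref{def:qperiod}.

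For (a) I would run the homogeneity argument from the proof of Lemma \ref{lem:conj-is-false}, with the hypotheses now doing the bookkeeping. The coefficients $K_\beta$ and $I_\beta$ agree for $\beta$ $I$-nonnegative (including $\beta=0$). For $\beta$ not $I$-nonnegative, $K_\beta = 0$ by hypothesis (1), while by Theorem \ref{thm:Ifunc-formula} the coefficient $I_\beta(z)$ is a polynomial in $z$ and the classes $c_1(\L_\xi)$ times $[F_\beta(Y\sslash T)]$, so each cohomology class occurring in it has Chen--Ruan degree at least $\age(g_\beta^{-1}) + \mathrm{codim}_{\In{Y\sslash T}_{g_\beta^{-1}}} F_\beta(Y\sslash T) \geq 1$ by hypothesis (2). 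Since $Y\sslash_\theta T$ is a Fano orbifold in which every nonzero $I$-effective class has strictly positive degree, we have $\deg(q^\beta) > 0$; degree-$0$ homogeneity of $I(r(q),z)$ (Remark \ref{rmk:homogeneity}) then forces every power of $z$ appearing in $q^\beta I_\beta(z)$ to be strictly less than $-1$, so such $\beta$ contribute to neither mirror map. This gives (a). (One also needs to confirm that $\deg(q^\beta)>0$ for the relevant $\beta\in\Hom(\chi(T),\QQ)$ follows from the stated positivity hypothesis on $I$-effective classes in $\Hom(\Pic^T(Y),\QQ)$, via the natural map between these lattices.)

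For (b), note that once (a) holds the faulty $\widetilde J$ is produced from $K$ by exactly the recipe producing $J^{Y\sslash T}$ from $I(r(q),z)$, using the same mirror map, so $\gamma(t,x)$ is the same in both computations. If $\gamma(t,x)$ restricts under $q\mapsto t^{-K^{Y\sslash T}}$ to a reparametrization of $\mu$, then $J^{Y\sslash T}(\gamma(t,x),t^{-K^{Y\sslash T}},1)$ and $\widetilde J(\gamma(t,x),t^{-K^{Y\sslash T}},1)$ equal $I(r(q),z)$ and $K(r(q),z)$ under this specialization, so the two quantum periods differ by $\sum t^{-K^{Y\sslash T}\cdot\beta}[I_\beta(1)]_{\one}$, summed over $\beta$ not $I$-nonnegative; each term vanishes because $I_\beta(1)$ is a multiple of $[F_\beta(Y\sslash T)]$ and so has trivial $\one$-component. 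More generally one invokes that the recipe of \cite{many} is triangular for the Chen--Ruan degree filtration, so the $\one$-component of $\widetilde J - J^{Y\sslash T}$ at the relevant point is controlled by the Chen--Ruan-degree-$0$ part of $K(r(q),z)-I(r(q),z) = -\sum q^\beta I_\beta(z)$ (over $\beta$ not $I$-nonnegative, using $K_\beta=0$), which is zero by hypothesis (2). Either way (b) holds.

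The main obstacle is (b): precisely unpacking the definition of the quantum period in \cite{many} for orbifold targets, in particular verifying that the substitution $\gamma(t,x)$ is governed by the mirror map — so that (a) is exactly what synchronizes the two computations — and that extracting $[\,\cdot\,]_{\one}$ is insensitive to the Chen--Ruan-degree-$\geq 1$ terms where $I(r(q),z)$ and $K(r(q),z)$ differ. Once that structural input is settled, the remaining work is the homogeneity bookkeeping already carried out in Lemma \ref{lem:conj-is-false}.
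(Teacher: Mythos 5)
Your proposal is correct and follows essentially the same route as the paper: (a) use the degree hypothesis, homogeneity, and conditions (1)--(2) to show the two mirror maps coincide, then (b) observe that the quantum period sees only the $\one$-component, which agrees for $I$ and $K$ because condition (2) forces $[F_\beta(Y\sslash T)]$ to have positive codimension in the untwisted sector. One small note: the vanishing of $[I_\beta(1)]_\one$ is not automatic from $I_\beta$ being supported on $F_\beta(Y\sslash T)$ — it requires hypothesis (2) together with $\age(\one)=0$, which you do invoke a sentence later.
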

\begin{example}\label{ex:this-works}
Lemma \ref{lem:op-ok} shows that the quantum period computation in \cite[Sec~5.4]{OP} for the CI-GIT target considered in Section \ref{sec:failure} is correct, despite the discrepancy in $I$-functions.
\end{example}
\begin{remark}
A sharper version of Lemma \ref{lem:op-ok} also holds: in condition (2), one can replace $\mathrm{codim}_{\In{Y\sslash T}_{g_{ \beta}^{-1}}} [F_{ \beta}(Y\sslash T)]$ with the codimension of $[F_{ \beta}(Y\sslash T)]^{\vir}$ in $\In{Y\sslash T}_{g_{ \beta}^{-1}}$.
\end{remark}

\begin{proof}[Proof of Lemma \ref{lem:op-ok}]
The assumption on $\deg(q^\delta)$, together with homogeneity of $I$ and $I^{X\sslash T, E}$ (Remark \ref{rmk:homogeneity}), give us that $I$ and $\lim_{\kappa \to 0} \bi^*I^{X\sslash T, E}$ both have the form $\one + O(z^{-1})$. It follows that the \textit{mirror maps}
\[
\mu(r(q), z) := [zI(r(q), z)  - z]_+ \quad \quad \quad \mu^{\tw}(r(q),z) = [z\lim_{\kappa \to 0} \bi^* I^{X\sslash T, E}(r(q), z) - z]_+
\]
are determined by the coefficients of $z^{-1}$ in $I$ and $\lim_{\kappa \to 0} \bi^*I^{X\sslash T, E}$, respectively. The significance of $\mu^{tw}$ is that 
if Conjecture \ref{conj:ql} were true, we would have a formula
\begin{equation}\label{eq:bad-mirror}
J^{Y\sslash T}(\mu^{\tw}(r(q), -z), r\circ \iota(Q), z) = \lim_{\kappa \to 0} \bi^* I^{X\sslash T, E}(r(q), z)
\end{equation}
where $J^{Y\sslash T}(\tau, Q, z)$ is the $J$-function of $Y\sslash T$ (defined as in \cite{SW22a}). 
This is the formula used (implicitly) in \cite{OP} to compute the quantum period. The mirror theorem of \cite{yang} says that \eqref{eq:bad-mirror} does hold when we replace $\mu^{\tw}$ with $\mu$ and $\lim_{\kappa \to 0} \bi^*I^{X\sslash T, E}$ with $I$ (see also Definition \ref{def:qperiod}).

We first show that $\mu=\mu^{tw}$; i.e., we check that the coefficients of $z^{-1}$ in $I_\beta$ and $\lim_{\kappa \to 0} \bi^*I_\beta^{X\sslash T, E}$ agree for every $I$-effective $\beta \in \Hom(\chi(T), \QQ)$. If $\beta$ is $I$-nonnegative then it follows from the definitions that $\lim_{\kappa \to 0} \bi^*I_\beta^{X\sslash T, E}$ exists and equals $I_\beta$.
Any terms with $q^{ \beta} z^{-1}$ in $\lim_{\kappa \to 0} \bi^*I^{X\sslash T, E}$ where $ \beta$ is not $I$-nonnegative are zero by (1); we show that the same holds for $I$. By homogeneity, a nonzero term of the form $\alpha q^{ \beta} z^{-1}$ in $I(r(q), z)$ must have $\alpha \in H^k(\In{Y\sslash G}_{g_{ \beta}^{-1}}; \QQ)$ where $\deg(q^{ \beta}) + k + \age_{Y\sslash T}(g_{ \beta}^{-1})=1$. By the restriction on $\deg(q^{\beta})$ we have $k+\age_{Y\sslash T}(g_{ \beta}^{-1}) < 1$. This contradicts condition (2) of the lemma, since by Theorem \ref{thm:Ifunc-formula} the class $\alpha$ is supported on $F_{ \beta}(Y\sslash T) \subset \In{Y\sslash T}_{g_{ \beta}^{-1}}$.

Since $\mu = \mu^{tw}$, if Conjecture \ref{conj:ql} were true we would have that
\[
J^{Y\sslash T} (\mu(q, -z), r\circ \iota(Q), z) = \lim_{\kappa \to 0} \bi^* I^{X\sslash T, E}(r(q), z) = I(r(q), z).
\]
In reality the series $\lim_{\kappa \to 0} \bi^* I^{X\sslash T, E}(r(q), z)$ and $I(r(q), z)$ may differ (see Section \ref{sec:failure}). However the quantum period only depends on the coefficient of $\one$ in these series. Once again, these coefficients agree for $I$-nonnegative $\beta$, and if $ \beta$ is not $I$-nonnegative then $\lim_{\kappa \to 0} \bi^*I^{X\sslash T, E}=0$, so it is enough to check that if $ \beta$ is not $I$-nonnegative and $g_{ \beta = 1}$ then $I_{ \beta}=0$. This follows from condition (2) since $\age(\one)=0$.

\end{proof}

\printbibliography
\end{document}